\documentclass[preprint,12pt]{elsarticle}
\usepackage{amssymb}
\usepackage{amsfonts,latexsym}
\usepackage{epsfig}
\usepackage{amsmath}
\usepackage{amsthm}
\usepackage{epstopdf}
\usepackage{graphicx}
\usepackage{amsmath}
\usepackage{color}
\usepackage{ulem}
\usepackage{tikz}
\usepackage{algorithm}
\usepackage{algorithmic}

\usepackage{mathtools}
\usepackage[scr=boondoxupr]{mathalpha}

\usetikzlibrary{decorations.markings}
\usepackage[top=3cm,bottom=4cm,right=3cm,left=3cm]{geometry}

\newtheorem{proposition}{Proposition}

\newtheorem{definition}{Definition}[section]

\journal{Journal of computational and applied mathematics}

\begin{document}

\begin{frontmatter}

\title{Randomized Tensor Krylov Subspace Methods 
via Sketched Einstein Product 
with Applications to Image and Video Restoration}

\author[1]{A. Badahmane}

\address[1]{The UM6P Vanguard Center, Mohammed VI Polytechnic University, Benguerir 43150, Lot 660, Hay Moulay Rachid, Morocco, email: badahmane.achraf@gmail.com}
\begin{abstract}
We develop a randomized extension of tensor Krylov subspace 
methods based on the Einstein product for solving large-scale 
multilinear systems arising in image and video restoration. 
The classical tensor global GMRES method relies on Frobenius 
inner products and full tensor orthogonalization, which become 
computationally expensive for high-dimensional problems.  We introduce a sketched Einstein inner product constructed 
via mode-wise random projections and develop a randomized 
tensor global Arnoldi process. The resulting Randomized 
Tensor Global GMRES (RTG-GMRES) method significantly reduces 
orthogonalization cost while preserving convergence properties 
under tensor subspace embedding assumptions. Residual bounds, perturbation analysis and projected 
Tikhonov regularization are derived. The proposed method 
provides an efficient framework for solving ill-posed 
multidimensional problems arising in color image and 
video restoration.
\end{abstract}
\end{frontmatter}

\section{Introduction}

We consider multilinear systems of the form

\[
\Phi(\mathcal{X}) = \mathcal{C},
\]

where $\Phi$ is a linear tensor mapping defined through the 
Einstein product:

\[
\Phi(\mathcal{X}) = \mathcal{A} *_N \mathcal{X}
\quad \text{or} \quad
\Phi(\mathcal{X}) = \mathcal{A} *_N \mathcal{X} *_M \mathcal{B}.
\]

Such problems arise in color image restoration, video 
deblurring, hyperspectral imaging, and high-dimensional 
PDE discretizations. Tensor Krylov subspace methods based on the Einstein product 
allow us to preserve multidimensional structure without 
vectorization. Classical tensor global GMRES methods construct 
orthonormal bases using Frobenius inner products. However, 
for high-order tensors with dimension
\[
n = \prod_{k=1}^{N} I_k \prod_{l=1}^{M} J_l,
\]
the orthogonalization cost becomes prohibitive. 
To overcome this limitation, we introduce a randomized 
framework based on a sketched Einstein inner product. 
The proposed method reduces orthogonalization complexity 
while maintaining convergence behavior. 
High-dimensional data, such as color images, hyperspectral images, and video sequences, are ubiquitous in modern scientific and engineering applications. Traditional matrix-based methods often require vectorization, which destroys the intrinsic multidimensional structure of the data and leads to large, memory-intensive problems. To overcome these challenges, tensor-based approaches preserve the multilinear structure, enabling both computational efficiency and more accurate modeling of complex data dependencies.  

In this paper, we consider linear tensor equations of the form:
\begin{equation}
\min \|\Phi(\mathcal{X}) - \mathcal{C}\|_F, 
\end{equation}
where $\mathcal{X} \in \mathbb{R}^{I_1 \times \cdots \times I_N \times J_1 \times \cdots \times J_M}$ is the unknown tensor, $\mathcal{C}$ is a given observed tensor, and $\Phi$ is a linear tensor operator modeling transformations such as blurring, degradation, or subsampling. In particular, $\Phi$ may take the forms:
\begin{equation}
\Phi(\mathcal{X}) = \mathcal{A} *_N \mathcal{X}, \quad \text{or} \quad \Phi(\mathcal{X}) = \mathcal{A} *_N \mathcal{X} *_M \mathcal{B},
\end{equation}
where $*_N, *_M$ denote Einstein products along specified modes. The Einstein product generalizes matrix multiplication to higher-order tensors while preserving multidimensional correlations, making it especially suitable for multichannel image and video processing.  

Many real-world applications, including image and video restoration, signal processing, and multidimensional PDEs, naturally lead to tensor equations. While classical Krylov subspace methods such as GMRES or Golub--Kahan work well for grayscale or low-dimensional problems, high-dimensional multichannel problems demand tensor-based iterative methods to maintain both efficiency and accuracy.  

Here, we introduce a novel approach: Randomized Tensor Krylov Subspace Methods via Sketched Einstein Product. The core idea is to combine the Einstein product with randomized sketching to reduce computational cost and memory usage while preserving essential information. Specifically:
\begin{enumerate}
    \item Tensor Krylov Subspaces allow iterative solution of high-dimensional tensor equations while preserving their natural structure.  
    \item Randomized sketching projects large tensors into smaller subspaces, enabling faster computation of tensor-tensor products without significantly sacrificing accuracy.  
    \item Tikhonov regularization is incorporated to stabilize solutions of ill-posed problems, such as restoring blurred and noisy images and videos.  
\end{enumerate}
The combination of sketched Einstein products with Krylov subspace methods offers a powerful and efficient framework for solving large-scale, high-dimensional tensor equations. This approach reduces computational complexity, maintains the low-bandwidth structure of point spread function (PSF) tensors, and enables practical application to real-world high-resolution imaging problems.  
The remainder of the paper is organized as follows:  
\begin{itemize}
    \item Section 2 introduces tensor notations, Einstein products, and algebraic preliminaries.  
    \item Section 3 presents Tikhonov regularization and extends global GMRES and Golub--Kahan methods to the sketched Einstein product framework.  
    \item Section 4 develops block tensor formulations of the algorithms and analyzes their theoretical properties.  
    \item Section 5 demonstrates numerical results on restoring blurred and noisy color images and videos.  
    \item Section 6 concludes with discussions on future extensions and applications.  
\end{itemize}


\section{Preliminaries}

\subsection{Einstein Product}

Let
\[
\mathcal{A} \in 
\mathbb{R}^{I_1 \times \dots \times I_L \times K_1 \times \dots \times K_N},
\quad
\mathcal{B} \in 
\mathbb{R}^{K_1 \times \dots \times K_N \times J_1 \times \dots \times J_M}.
\]

The Einstein product over $N$ modes is defined by

\[
(\mathcal{A} *_N \mathcal{B})_{i_1\dots i_L j_1\dots j_M}
=
\sum_{k_1=1}^{K_1} \dots \sum_{k_N=1}^{K_N}
a_{i_1\dots i_L k_1\dots k_N}
b_{k_1\dots k_N j_1\dots j_M}.
\]

The resulting tensor belongs to

\[
\mathbb{R}^{I_1 \times \dots \times I_L \times J_1 \times \dots \times J_M}.
\]

This operation generalizes the matrix product to higher–order tensors.

\subsection{Frobenius Inner Product}

For tensors 
\[
\mathcal{X}, \mathcal{Y}
\in 
\mathbb{R}^{I_1 \times \dots \times I_L \times J_1 \times \dots \times J_M},
\]
the Frobenius inner product is defined by

\[
\langle \mathcal{X}, \mathcal{Y} \rangle_F
=
\sum_{i_1=1}^{I_1}\dots\sum_{i_L=1}^{I_L}
\sum_{j_1=1}^{J_1}\dots\sum_{j_M=1}^{J_M}
x_{i_1\dots i_L j_1\dots j_M}
y_{i_1\dots i_L j_1\dots j_M}.
\]

The induced norm is

\[
\|\mathcal{X}\|_F^2
=
\langle \mathcal{X}, \mathcal{X} \rangle_F.
\]

\section{Sketched Einstein Inner Product}

Let 
\[
\Theta_k \in \mathbb{R}^{\ell_k \times I_k},
\quad k=1,\dots,L,
\]
and
\[
\Psi_l \in \mathbb{R}^{\tilde{\ell}_l \times J_l},
\quad l=1,\dots,M,
\]
be independent random projection matrices satisfying a subspace embedding property.

\subsection{Compressed Tensor}

Define the compressed tensor by successive mode products

\[
\mathcal{X}_\Theta
=
\mathcal{X}
\times_1 \Theta_1
\times_2 \Theta_2
\dots
\times_L \Theta_L
\times_{L+1} \Psi_1
\dots
\times_{L+M} \Psi_M.
\]

Then

\[
\mathcal{X}_\Theta
\in
\mathbb{R}^{\ell_1 \times \dots \times \ell_L
\times
\tilde{\ell}_1 \times \dots \times \tilde{\ell}_M}.
\]

\begin{definition}
The sketched Einstein inner product is defined by

\[
\langle \mathcal{X}, \mathcal{Y} \rangle_\Theta
=
\langle \mathcal{X}_\Theta, \mathcal{Y}_\Theta \rangle_F.
\]
\end{definition}

Equivalently,

\[
\langle \mathcal{X}, \mathcal{Y} \rangle_\Theta
=
\sum
(\mathcal{X}_\Theta)_{\alpha_1\dots\alpha_L\beta_1\dots\beta_M}
(\mathcal{Y}_\Theta)_{\alpha_1\dots\alpha_L\beta_1\dots\beta_M}.
\]

Let 
\[
\mathcal{X} \in 
\mathbb{R}^{I_1 \times \dots \times I_d},
\]
and let
\[
\Theta_k \in \mathbb{R}^{\ell_k \times I_k}, 
\quad k=1,\dots,d,
\]
with $\ell_k \ll I_k$, be sketching matrices.

Define the global tensor sketching operator

\[
\mathcal{S}(\mathcal{X})
=
\mathcal{X}
\times_1 \Theta_1
\times_2 \Theta_2
\dots
\times_d \Theta_d.
\]

This operator embeds tensor subspaces of 
\[
\mathbb{R}^{I_1 \times \dots \times I_d}
\]
into lower-dimensional tensor spaces

\[
\mathbb{R}^{\ell_1 \times \dots \times \ell_d}.
\]

The Frobenius inner product between tensors is approximated by

\[
\langle \mathcal{X}, \mathcal{Y} \rangle_F
\;\approx\;
\langle \mathcal{S}(\mathcal{X}), \mathcal{S}(\mathcal{Y}) \rangle_F.
\]

\subsubsection{Tensor Subspace Embedding}

Let $\mathcal{V}$ be a finite-dimensional tensor subspace of
\[
\mathbb{R}^{I_1 \times \dots \times I_d}.
\]

\begin{definition}[Tensor $\varepsilon$-Subspace Embedding]
Let $\varepsilon < 1$.  
The collection of sketching matrices
$\{\Theta_k\}_{k=1}^d$
defines an $\varepsilon$-subspace embedding for $\mathcal{V}$ if

\begin{equation}
\label{eq:tensor_embedding}
\forall \mathcal{X},\mathcal{Y} \in \mathcal{V},
\qquad
\big|
\langle \mathcal{X},\mathcal{Y}\rangle_F
-
\langle \mathcal{S}(\mathcal{X}),
        \mathcal{S}(\mathcal{Y})\rangle_F
\big|
\le
\varepsilon
\|\mathcal{X}\|_F
\|\mathcal{Y}\|_F .
\end{equation}
\end{definition}

\subsubsection{Equivalent Vectorized Form}

Using vectorization, we have

\[
\mathrm{vec}(\mathcal{S}(\mathcal{X}))
=
\left(
\Theta_d \otimes \dots \otimes \Theta_1
\right)
\mathrm{vec}(\mathcal{X}).
\]

Thus the tensor embedding reduces to a classical
$\ell_2$-subspace embedding in
\[
\mathbb{R}^{I_1 \cdots I_d}.
\]

\subsection{Tensor Matrix Subspace Embedding}

Let $\mathcal{V}$ admit a basis
\[
\{\mathcal{V}_1,\dots,\mathcal{V}_r\},
\]
and define the tensor subspace

\[
\mathcal{U}
=
\left\{
\sum_{i=1}^r \mathcal{V}_i \alpha_i
\;:\;
\alpha_i \in \mathbb{R}
\right\}.
\]

\begin{definition}[Tensor Subspace Embedding – Norm Form]
The sketching operator $\mathcal{S}$ is called an
$\varepsilon$-subspace embedding for $\mathcal{U}$
if

\begin{equation}
\label{eq:tensor_norm_embedding}
(1-\varepsilon)
\|\mathcal{X}\|_F^2
\le
\|\mathcal{S}(\mathcal{X})\|_F^2
\le
(1+\varepsilon)
\|\mathcal{X}\|_F^2,
\quad
\forall \mathcal{X} \in \mathcal{U}.
\end{equation}
\end{definition}

\subsection{Tensor Singular-Value Bounds}

Let $\mathcal{V}$ be represented in matricized form along any mode $k$:

\[
V_{(k)} \in \mathbb{R}^{I_k \times r_k}.
\]

Let
\[
\Theta_k V_{(k)}
\]
be the sketched unfolding.

\begin{proposition}[Tensor Singular-Value Bounds]
If $\mathcal{S}$ is an $\varepsilon$-subspace embedding for $\mathcal{U}$,
then for every unfolding $V_{(k)}$,

\begin{equation}
(1+\varepsilon)^{-1/2}
\sigma_{\min}(\Theta_k V_{(k)})
\le
\sigma_{\min}(V_{(k)})
\le
\sigma_{\max}(V_{(k)})
\le
(1-\varepsilon)^{-1/2}
\sigma_{\max}(\Theta_k V_{(k)}).
\end{equation}
\end{proposition}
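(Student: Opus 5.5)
The plan is to reduce the statement to the classical matrix fact that an $\ell_2$-subspace embedding distorts every singular value of a basis matrix by at most a factor $(1\pm\varepsilon)^{1/2}$. The core is a two-sided Rayleigh-quotient comparison, which I would then feed into the Courant--Fischer characterizations of $\sigma_{\min}$ and $\sigma_{\max}$.

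\textbf{Step 1 (setting up the interpretation).} I will read the statement so that the columns of $V_{(k)}$ span the subspace to which the embedding applies, and so that $\Theta_k V_{(k)}$ is the sketch of those columns. By the equivalent vectorized form, $\mathrm{vec}(\mathcal{S}(\mathcal{X}))=(\Theta_d\otimes\dots\otimes\Theta_1)\mathrm{vec}(\mathcal{X})$. Hence if $\mathcal{X}=\sum_i \mathcal{V}_i\alpha_i\in\mathcal{U}$ corresponds to $V_{(k)}a$ for a coefficient vector $a$, then $\|\mathcal{X}\|_F=\|V_{(k)}a\|_2$ and $\|\mathcal{S}(\mathcal{X})\|_F=\|\Theta_k V_{(k)}a\|_2$.

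I expect this identification to be the main obstacle. Literally, $\Theta_k$ acts on only one mode, while the embedding hypothesis \eqref{eq:tensor_norm_embedding} concerns the full operator $\mathcal{S}$. My first choice is to take $V_{(k)}$ as the matrix whose columns are the vectorized basis tensors, with $\Theta_k$ standing for the corresponding (Kronecker) sketch. Alternatively, I would make explicit the assumption that the embedding property restricts to the mode-$k$ column space. All the remaining steps go through under either reading.

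\textbf{Step 2 (Rayleigh-quotient comparison).} From \eqref{eq:tensor_norm_embedding} I obtain, for every $a$,
\[
(1-\varepsilon)\|V_{(k)}a\|_2^2\le\|\Theta_kV_{(k)}a\|_2^2\le(1+\varepsilon)\|V_{(k)}a\|_2^2 .
\]

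\textbf{Step 3 (lower bound).} I use $\sigma_{\min}(M)=\min_{\|a\|_2=1}\|Ma\|_2$. Let $a$ be a unit minimizer for $V_{(k)}$. Then
\[
\sigma_{\min}(\Theta_kV_{(k)})\le\|\Theta_kV_{(k)}a\|_2\le(1+\varepsilon)^{1/2}\|V_{(k)}a\|_2=(1+\varepsilon)^{1/2}\sigma_{\min}(V_{(k)}),
\]
which is the left inequality of the proposition.

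\textbf{Step 4 (upper bound and conclusion).} I use $\sigma_{\max}(M)=\max_{\|a\|_2=1}\|Ma\|_2$. Let $a$ be a unit maximizer for $V_{(k)}$. Then
\[
\sigma_{\max}(V_{(k)})=\|V_{(k)}a\|_2\le(1-\varepsilon)^{-1/2}\|\Theta_kV_{(k)}a\|_2\le(1-\varepsilon)^{-1/2}\sigma_{\max}(\Theta_kV_{(k)}).
\]
The middle inequality $\sigma_{\min}(V_{(k)})\le\sigma_{\max}(V_{(k)})$ is trivial, so the chain is complete.

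Note that $\varepsilon<1$ keeps $(1-\varepsilon)^{-1/2}$ finite. Both bounds are one-line consequences once Step 1 is in place, so Step 1 is where care is needed.
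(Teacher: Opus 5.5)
You are right that Step 1 is the crux, but it is not a matter of care. Under the statement's own notation ($\Theta_k\in\mathbb{R}^{\ell_k\times I_k}$ the mode-$k$ factor, $V_{(k)}$ a mode-$k$ unfolding, hypothesis on the full $\mathcal{S}$), the identity $\|\mathcal{S}(\mathcal{X})\|_F=\|\Theta_kV_{(k)}a\|_2$ is false. No argument can close this gap, because the proposition itself is false in that reading. The hypothesis only involves $\mathcal{S}$, which is unchanged under $\Theta_k\mapsto c\,\Theta_k$, $\Theta_j\mapsto c^{-1}\Theta_j$ for one fixed $j\neq k$, whereas $\Theta_kV_{(k)}$ scales by $c$.

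Concretely, take $d=2$, $\mathcal{U}=\operatorname{span}\{u\circ w\}$, and sketches with $\|\Theta_1u\|_2=c\|u\|_2$ and $\|\Theta_2w\|_2=c^{-1}\|w\|_2$. Then $\|\mathcal{S}(u\circ w)\|_F=\|\Theta_1u\|_2\|\Theta_2w\|_2=\|u\circ w\|_F$, so $\mathcal{S}$ is an isometry on $\mathcal{U}$ and hence an $\varepsilon$-embedding for every $\varepsilon\in[0,1)$. Yet $V_{(1)}=uw^T$ gives $\sigma_{\max}(\Theta_1V_{(1)})=c\,\sigma_{\max}(V_{(1)})$, which violates the right-hand inequality whenever $c<(1-\varepsilon)^{1/2}$.

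The paper's proof has the same hole. It vectorizes to the Kronecker sketch, records the norm bound, and then ``applies this to rank-one tensors generated by singular vectors of the unfoldings''. Such tensors need not lie in $\mathcal{U}$. Even when they do, $\|\mathcal{S}(u_1\circ\cdots\circ u_d)\|_F=\prod_j\|\Theta_ju_j\|_2$ constrains only the product, never $\|\Theta_ku_k\|_2$ alone.

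Your Steps 2--4 are the standard Courant--Fischer argument and are correct. What you have proved is therefore a \emph{corrected} statement, and you should present it as such rather than as an interpretation. There are two options:
\begin{itemize}
\item[(a)] Replace $\Theta_k$ by $\mathbf{\Theta}=\Theta_d\otimes\cdots\otimes\Theta_1$ and $V_{(k)}$ by $V=[\mathrm{vec}(\mathcal{V}_1),\dots,\mathrm{vec}(\mathcal{V}_r)]$. This is the classical $\ell_2$-embedding bound, has nothing to do with mode $k$, and is what the paper's vectorization argument honestly establishes before its unjustified last step.
\item[(b)] Add the hypothesis that $\Theta_k$ alone is an $\varepsilon$-embedding for $\operatorname{range}(V_{(k)})$. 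In that case the hypothesis on $\mathcal{S}$ is never used.
\end{itemize}

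One more detail in Step 3: the formula $\sigma_{\min}(M)=\min_{\|a\|_2=1}\|Ma\|_2$ requires $M$ to have at least as many rows as columns. In reading (a) this is automatic, since the $\mathcal{V}_i$ form a basis and the lower embedding bound makes $\mathbf{\Theta}V$ injective. A genuine mode-$k$ unfolding, however, is typically wide ($r_k>I_k$), so in reading (b) you must also assume $V_{(k)}$ has full column rank.
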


\begin{proof}
By vectorization,
\[
\mathrm{vec}(\mathcal{S}(\mathcal{X}))
=
\mathbf{\Theta}
\mathrm{vec}(\mathcal{X}),
\quad
\mathbf{\Theta}
=
\Theta_d \otimes \dots \otimes \Theta_1.
\]

Since $\mathbf{\Theta}$ is an $\varepsilon$-embedding
for $\mathrm{vec}(\mathcal{U})$,
classical singular-value perturbation bounds
for subspace embeddings yield

\[
(1-\varepsilon)
\|\mathcal{X}\|_F^2
\le
\|\mathcal{S}(\mathcal{X})\|_F^2
\le
(1+\varepsilon)
\|\mathcal{X}\|_F^2.
\]

Applying this to rank-one tensors generated by singular vectors
of the unfoldings gives the desired bounds.
\end{proof}
\section*{Krylov subspace methods via Sketched Einstein product and Tikhonov regularization}

In this section, we extend the tensor global Arnoldi process by incorporating a randomized sketching operator within the Einstein product framework. This allows the construction of reduced tensor Krylov subspaces while preserving the algebraic structure induced by the Einstein product. The proposed methods maintain the same theoretical properties as the classical tensor global GMRES and GGKB schemes, but with reduced computational complexity.

\subsection*{Tikhonov regularization}

We consider the ill-posed tensor equation
\begin{equation*}
\Phi(\mathcal{X}) = \mathcal{C} + \mathcal{E}, \tag{4}
\end{equation*}
where $\mathcal{E}$ denotes the noise tensor.  

To stabilize the inversion process, we consider the Tikhonov regularization problem
\begin{equation*}
\mathcal{X}_{\mu}
=
\arg\min_{\mathcal{X}}
\left(
\|\Phi(\mathcal{X}) - \mathcal{C}\|_F^2
+
\mu \|\mathcal{X}\|_F^2
\right),
\tag{5}
\end{equation*}
where $\mu > 0$ is the regularization parameter.

\subsection*{Sketched tensor Krylov subspace}

Let $\mathcal{S}$ be a linear sketching operator acting on 
$\mathbb{R}^{I_{1} \times \cdots \times I_{N} \times J_{1} \times \cdots \times J_{M}}$
such that it approximately preserves inner products induced by the Einstein product.

We define the sketched operator
\[
\Phi_{S}(\mathcal{X}) = \mathcal{S}\big(\Phi(\mathcal{X})\big).
\]

The $m$-th sketched tensor Krylov subspace generated by $\Phi$ and $\mathcal{V}$ is defined by
\begin{equation*}
\mathcal{K}_{m}^{S}(\Phi, \mathcal{V})
=
\operatorname{span}
\left\{
\mathcal{V},
\Phi_{S}(\mathcal{V}),
\Phi_{S}^{2}(\mathcal{V}),
\ldots,
\Phi_{S}^{m-1}(\mathcal{V})
\right\}.
\tag{6S}
\end{equation*}

This definition preserves the recursive tensor structure while reducing the dimensional complexity at each application of $\Phi$.

\subsection*{Sketched Global Arnoldi process}

The orthonormal basis construction follows the same principle as Algorithm 1, with $\Phi$ replaced by $\Phi_{S}$.

\begin{verbatim}
Algorithm 2 Sketched Global Arnoldi process.
    Inputs: $\phi$, initial tensor V, integer m, sketching operator S.
    $\beta$ = ||V||_F
    V_1 = V / $\beta$
    For j = 1,...,m
        W = S( $\phi$(V_j) )
        For i = 1,...,j
            h_ij = <V_i , W>
            W = W - h_ij V_i
        EndFor
        h_{j+1,j} = ||W||_F
        If h_{j+1,j} = 0 stop
        V_{j+1} = W / h_{j+1,j}
    EndFor
\end{verbatim}

Let $\widetilde{H}_m$ be the $(m+1)\times m$ Hessenberg matrix formed by the coefficients $h_{ij}$.  
Then the following Arnoldi-type relation holds:

\begin{equation*}
\mathbb{W}_m
=
\mathbb{V}_{m+1}
\times_{(M+N+1)}
\widetilde{H}_m^{T},
\tag{7S}
\end{equation*}

where $\mathbb{W}_m$ contains the tensors $\Phi_{S}(\mathcal{V}_1),\ldots,\Phi_{S}(\mathcal{V}_m)$.

\subsection*{Sketched tensor global GMRES}

We seek an approximate solution of
\begin{equation*}
\Phi(\mathcal{X}) = \mathcal{C}
\tag{8}
\end{equation*}
in the affine space
\[
\mathcal{X}_m
=
\mathcal{X}_0
+
\mathbb{V}_m
\bar{\times}_{(M+N+1)}
y_m.
\tag{10S}
\]

The residual is given by
\[
\mathcal{R}_m
=
\mathcal{C}
-
\Phi(\mathcal{X}_m).
\]

Using the sketched Arnoldi relation (7S), one obtains

\begin{equation*}
\|\mathcal{R}_m\|_F
=
\|
\beta e_1^{m+1}
-
\widetilde{H}_m y_m
\|_2.
\end{equation*}

Hence $y_m$ solves
\begin{equation*}
y_m
=
\arg\min_y
\|
\beta e_1^{m+1}
-
\widetilde{H}_m y
\|_2.
\tag{11S}
\end{equation*}

\subsection*{Sketched projected Tikhonov problem}

Instead of solving (5) directly, we solve the reduced problem

\begin{equation*}
\|
\beta e_1^{m+1}
-
\widetilde{H}_m y
\|_2^2
+
\mu \|y\|_2^2.
\tag{12S}
\end{equation*}

Its minimizer satisfies
\begin{equation*}
(\widetilde{H}_m^T \widetilde{H}_m + \mu I)y
=
\widetilde{H}_m^T \beta e_1^{m+1}.
\tag{14S}
\end{equation*}

As in the classical case, the parameter $\mu$ can be selected using GCV or L-curve techniques applied to the small projected problem.
\subsection*{Proposition 2 and TG-GMRES formulation}

\textbf{Proposition 2.} Let $\mathbb{V}$ be the $(M+N+1)$-mode tensor with frontal slices $\mathcal{V}_1, \mathcal{V}_2, \ldots, \mathcal{V}_m$, and let $\mathbb{W}_m$ be the $(M+N+1)$-mode tensor with frontal slices $\Phi(\mathcal{V}_1), \ldots, \Phi(\mathcal{V}_m)$. Then we have
\begin{equation}
\mathbb{W}_m = \mathbb{V}_{m+1} \times_{(M+N+1)} \widetilde{H}_m^T
= \mathbb{V}_m \times_{(M+N+1)} H_m^T + h_{m+1,m} \mathcal{L}_m \times_{(M+N+1)} E_m,
\tag{7}
\end{equation}
where $E_m = [0,0,\ldots,0,e_m]$ with $e_m$ being the $m$-th column of the identity matrix $I_m$, and $\mathcal{L}_m$ is an $(M+N+1)$-mode tensor whose frontal slices are all zero except for the last one, which equals $\mathcal{V}_{m+1}$.

Let $\mathcal{X} \in \mathbb{R}^{I_1 \times \cdots \times I_N \times J_1 \times \cdots \times J_M}$, $\Phi$ a linear tensor mapping, and $\mathcal{C} \in \mathbb{R}^{I_1 \times \cdots \times I_N \times J_1 \times \cdots \times J_M}$. Consider the tensor equation
\begin{equation}
\Phi(\mathcal{X}) = \mathcal{C}.
\tag{8}
\end{equation}

Using Algorithm 1, we define the tensor global GMRES (TG-GMRES) method. For an initial guess $\mathcal{X}_0$, we seek an approximate solution $\mathcal{X}_m$ such that
\begin{equation}
\mathcal{X}_m \in \mathcal{X}_0 + \mathcal{K}_m(\Phi,\mathcal{V}),
\end{equation}
and minimize the residual
\begin{equation}
\|\mathcal{R}_m\|_F = \min_{\mathcal{X} \in \mathcal{X}_0 + \mathcal{K}_m(\Phi,\mathcal{V})} \|\mathcal{C} - \Phi(\mathcal{X})\|_F,
\tag{9}
\end{equation}
where $\mathcal{R}_m = \mathcal{C} - \Phi(\mathcal{X}_m)$.

Assume $m$ steps of Algorithm 1 have been performed. Then, setting
\begin{equation}
\mathcal{X}_m = \mathcal{X}_0 + \mathbb{V}_m \times_{(M+N+1)} y_m,
\tag{10}
\end{equation}
the residual becomes
\begin{equation}
\mathcal{R}_m = \mathcal{R}_0 - \mathbb{W}_m \times_{(M+N+1)} y_m.
\end{equation}
Using Proposition 2, we obtain
\begin{align*}
\|\mathcal{C} - \Phi(\mathcal{X}_m)\|_F
&= \|\mathbb{V}_m \times_{(M+N+1)} (\mathcal{C} - \Phi(\mathcal{X}_m))\|_2 \\
&= \|\mathbb{V}_m \times_{(M+N+1)} (\mathcal{R}_0 - \mathbb{W}_m \times_{(M+N+1)} y_m)\|_2 \\
&= \|\beta e_1^{m+1} - \mathbb{V}_m \times_{(M+N+1)} (\mathbb{W}_m \times_{(M+N+1)} y_m)\|_2 \\
&= \|\beta e_1^{m+1} - (\mathbb{V}_m \times_{(M+N+1)} \mathbb{W}_m) y_m\|_2,
\end{align*}
which shows that $y_m$ is determined by
\begin{equation}
y_m = \arg\min_y \|\beta e_1^{m+1} - \widetilde{H}_m y\|_2.
\tag{11}
\end{equation}

\subsubsection*{Sketched Tikhonov regularization}

Setting $\mathcal{X}_0 = 0$, the TG-GMRES approximation leads to the low-dimensional Tikhonov problem
\begin{equation}
\| \beta e_1^{m+1} - \widetilde{H}_m y \|_2^2 + \mu \|y\|_2^2.
\tag{12}
\end{equation}

Its solution is given by
\begin{equation}
y_{m,\mu} = \arg\min_y
\left\|
\begin{pmatrix}
\widetilde{H}_m \\
\sqrt{\mu} I
\end{pmatrix} y
-
\begin{pmatrix}
\beta e_1^{m+1} \\
0
\end{pmatrix}
\right\|_2.
\tag{13}
\end{equation}

Equivalently, $y_{m,\mu}$ satisfies the linear system
\begin{equation}
\widetilde{H}_{m,\mu} y = \widetilde{H}_m^T \beta e_1^{m+1}, \quad
\widetilde{H}_{m,\mu} = \widetilde{H}_m^T \widetilde{H}_m + \mu I,
\tag{14}
\end{equation}
although solving (13) is numerically more stable than directly solving (14).

\subsubsection*{Parameter selection via GCV}

For small $m$, the problem can be efficiently solved using SVD or other techniques. The regularization parameter $\mu$ can be chosen via the generalized cross-validation (GCV) function:
\begin{equation}
\text{GCV}(\mu) = \frac{\|\widetilde{H}_m y_{m,\mu} - \beta e_1^{m+1}\|_2^2}{\big[\operatorname{tr}(I - \widetilde{H}_m \widetilde{H}_{m,\mu}^{-1} \widetilde{H}_m^T)\big]^2} 
= \frac{\|(I - \widetilde{H}_m \widetilde{H}_{m,\mu}^{-1} \widetilde{H}_m^T)\beta e_1^{m+1}\|_2^2}{\big[\operatorname{tr}(I - H_m H_{m,\mu}^{-1} \widetilde{H}_m^T)\big]^2}.
\end{equation}

\section{Block Krylov Subspace Methods via Sketched Einstein Product}

In this section we generalize block GMRES and block Golub–Kahan methods
to the tensor setting using the sketched Einstein product.
All orthogonality and projections are defined with respect to the
sketched Einstein inner product introduced previously.

\subsection*{4.1 Preliminaries and Tensor Unfolding}

Let 
\[
\mathcal{X} \in 
\mathbb{R}^{I_1 \times \cdots \times I_N 
\times 
J_1 \times \cdots \times J_M}.
\]

Define the unfolding operator

\[
\Psi_{IJ} :
\mathbb{R}^{I_1 \times \cdots \times I_N 
\times 
J_1 \times \cdots \times J_M}
\longrightarrow
\mathbb{R}^{(I_1\cdots I_N)\times(J_1\cdots J_M)},
\]

such that

\[
X_{\mathrm{ivec}(i,I),\mathrm{ivec}(j,J)}
=
\mathcal{X}_{i_1\dots i_N j_1\dots j_M},
\]

where

\[
\mathrm{ivec}(i,I)
=
i_1 + \sum_{r=2}^{N}(i_r-1)\prod_{u=1}^{r-1}I_u,
\]

\[
\mathrm{ivec}(j,J)
=
j_1 + \sum_{s=2}^{M}(j_s-1)\prod_{v=1}^{s-1}J_v.
\]

Under this mapping, the Einstein product corresponds to
standard matrix multiplication:

\[
\Psi_{IJ}(\mathcal{A} *_N \mathcal{B})
=
\Psi_{IK}(\mathcal{A})\,
\Psi_{KJ}(\mathcal{B}).
\]

\subsection*{Block Krylov Subspace}

Let
\[
\mathcal{A}
\in
\mathbb{R}^{I_1 \times \cdots \times I_N 
\times 
I_1 \times \cdots \times I_N},
\]
and
\[
\mathcal{V}
\in
\mathbb{R}^{I_1 \times \cdots \times I_N 
\times 
J_1 \times \cdots \times J_M}.
\]

The $m$-th tensor block Krylov subspace is defined by

\[
\mathcal{K}_m^{\mathrm{block}}(\mathcal{A},\mathcal{V})
=
\mathrm{span}\{
\mathcal{V},
\mathcal{A} *_N \mathcal{V},
\dots,
\mathcal{A}^{m-1} *_N \mathcal{V}
\}.
\]

All projections are performed using the sketched Einstein inner product:

\[
\langle \mathcal{X},\mathcal{Y}\rangle_\Theta
=
\langle \mathcal{S}(\mathcal{X}),
        \mathcal{S}(\mathcal{Y})\rangle_F.
\]

\subsection*{4.3 Block Arnoldi Process via Sketched Einstein Product}

Let $\mathcal{V}_1$ be obtained from the normalized residual

\[
\mathcal{R}_0
=
\mathcal{C}
-
\mathcal{A} *_N \mathcal{X}_0,
\qquad
\mathcal{V}_1
=
\mathcal{R}_0
/
\|\mathcal{R}_0\|_\Theta.
\]

For $j=1,\dots,m$:

\[
\mathcal{W}
=
\mathcal{A} *_N \mathcal{V}_j.
\]

For $i=1,\dots,j$:

\[
h_{ij}
=
\langle \mathcal{V}_i,\mathcal{W}\rangle_\Theta,
\qquad
\mathcal{W}
\leftarrow
\mathcal{W}
-
h_{ij}\mathcal{V}_i.
\]

Then

\[
h_{j+1,j}
=
\|\mathcal{W}\|_\Theta,
\qquad
\mathcal{V}_{j+1}
=
\mathcal{W}/h_{j+1,j}.
\]

This yields the tensor Arnoldi relation

\[
\mathcal{A} *_N \mathcal{V}_m
=
\mathcal{V}_{m+1}
*_M
\widetilde{\mathcal{H}}_m,
\]

where $\widetilde{\mathcal{H}}_m$
is block upper Hessenberg.

\subsection*{Tensor QR Factorization via Sketched Einstein Product}

Let
\[
\mathcal{B}
=
[\mathcal{V}_1,\dots,\mathcal{V}_m].
\]

A tensor QR factorization in the sketched Einstein sense is

\[
\mathcal{B}
=
\mathcal{Q}
*_M
\mathcal{R},
\]

such that

\[
\langle \mathcal{Q}_i,\mathcal{Q}_j\rangle_\Theta
=
\delta_{ij},
\]

and $\mathcal{R}$ is upper triangular
under unfolding.

\subsection{Block GMRES via Sketched Einstein Product}

Consider the tensor equation

\[
\mathcal{A} *_N \mathcal{X}
=
\mathcal{C}.
\]

The $m$-th approximation is

\[
\mathcal{X}_m
=
\mathcal{X}_0
+
\mathcal{V}_m *_M \mathcal{Y}_m,
\]

where $\mathcal{Y}_m$ solves the reduced problem

\[
\mathcal{Y}_m
=
\arg\min_{\mathcal{Y}}
\|
\beta e_1
-
\widetilde{\mathcal{H}}_m *_M \mathcal{Y}
\|_2.
\]

The residual is

\[
\mathcal{R}_m
=
\mathcal{C}
-
\mathcal{A} *_N \mathcal{X}_m.
\]

By the Arnoldi relation,

\[
\|\mathcal{R}_m\|_\Theta
=
\|
\beta e_1
-
\widetilde{\mathcal{H}}_m \mathcal{Y}_m
\|_2.
\]
\begin{algorithm}[H]
\caption{Block GMRES via Sketched Einstein Product}
\begin{algorithmic}[1]
\REQUIRE 
Tensor operator 
$\mathcal{A} \in 
\mathbb{R}^{I_1 \times \cdots \times I_N 
\times 
I_1 \times \cdots \times I_N}$,
$\mathcal{C}$, $\mathcal{X}_0$, $\varepsilon$, $m$, $\mathcal{S}(\cdot)$.

\ENSURE Approximate solution $\mathcal{X}_m$.

\STATE Compute initial residual:
\[
\mathcal{R}_0
=
\mathcal{C}
-
\mathcal{A} *_N \mathcal{X}_0.
\]

\STATE Compute $\beta = \|\mathcal{R}_0\|_\Theta$.

\STATE Normalize:
\[
\mathcal{V}_1
=
\mathcal{R}_0 / \beta.
\]

\FOR{$j = 1,2,\dots,m$}

    \STATE Compute tensor product:
    \[
    \mathcal{W}
    =
    \mathcal{A} *_N \mathcal{V}_j.
    \]

    \FOR{$i = 1,\dots,j$}

        \STATE Compute sketched inner product:
        \[
        h_{ij}
        =
        \langle \mathcal{V}_i, \mathcal{W} \rangle_\Theta.
        \]

        \STATE Orthogonalize:
        \[
        \mathcal{W}
        \leftarrow
        \mathcal{W}
        -
        h_{ij} \mathcal{V}_i.
        \]

    \ENDFOR

    \STATE Compute norm:
    \[
    h_{j+1,j}
    =
    \|\mathcal{W}\|_\Theta.
    \]

    \IF{$h_{j+1,j} = 0$}
        \STATE Stop.
    \ENDIF

    \STATE Normalize:
    \[
    \mathcal{V}_{j+1}
    =
    \mathcal{W}/h_{j+1,j}.
    \]

\ENDFOR

\STATE Form tensor Hessenberg matrix $\widetilde{\mathcal{H}}_m$.

\STATE Solve reduced least squares problem:
\[
y_m
=
\arg\min_y
\|
\beta e_1
-
\widetilde{H}_m y
\|_2.
\]

\STATE Update solution:
\[
\mathcal{X}_m
=
\mathcal{X}_0
+
\sum_{j=1}^{m}
\mathcal{V}_j y_j.
\]

\STATE Compute residual:
\[
\mathcal{R}_m
=
\mathcal{C}
-
\mathcal{A} *_N \mathcal{X}_m.
\]

\IF{$\|\mathcal{R}_m\|_\Theta < \varepsilon$}
    \STATE Stop.
\ENDIF

\RETURN $\mathcal{X}_m$.

\end{algorithmic}
\end{algorithm}
\subsection{Block Golub--Kahan via Sketched Einstein Product}

For the least squares problem

\[
\min_{\mathcal{X}}
\|
\mathcal{C}
-
\mathcal{A} *_P \mathcal{X}
\|_F,
\]

the block Golub–Kahan bidiagonalization generates
$\mathcal{U}_{m+1}$ and $\mathcal{V}_m$ such that

\[
\mathcal{A} *_P \mathcal{V}_m
=
\mathcal{U}_{m+1}
*_M
\widetilde{\mathcal{B}}_m,
\]

\[
\mathcal{A}^T *_N \mathcal{U}_m
=
\mathcal{V}_m *_M \mathcal{D}_m.
\]

The approximate solution is

\[
\mathcal{X}_m
=
\mathcal{X}_0
+
\mathcal{V}_m *_M \mathcal{Y}_m,
\]

where

\[
\mathcal{Y}_m
=
\arg\min_{\mathcal{Y}}
\|
\beta e_1
-
\widetilde{\mathcal{B}}_m \mathcal{Y}
\|_2.
\]
\begin{algorithm}[H]
\caption{Block Golub--Kahan via Sketched Einstein Product}
\begin{algorithmic}[1]

\REQUIRE 
Tensor operator 
$\mathcal{A} \in 
\mathbb{R}^{I_1 \times \cdots \times I_N 
\times 
J_1 \times \cdots \times J_M}$, 
$\mathcal{C}$, $\varepsilon$, $m$, $\mathcal{S}(\cdot)$.
\ENSURE Orthonormal tensor bases 
$\{\mathcal{U}_k\}$, $\{\mathcal{V}_k\}$ 
and bidiagonal matrix $\widetilde{B}_m$.

\STATE Compute initial normalization:
\[
\beta_1 = \|\mathcal{C}\|_\Theta,
\qquad
\mathcal{U}_1 = \mathcal{C}/\beta_1.
\]

\STATE Compute
\[
\mathcal{W}
=
\mathcal{A}^{\top} *_N \mathcal{U}_1.
\]

\STATE Compute
\[
\alpha_1
=
\|\mathcal{W}\|_\Theta,
\qquad
\mathcal{V}_1
=
\mathcal{W}/\alpha_1.
\]

\FOR{$k=1,2,\dots,m$}

    \STATE Compute forward projection:
    \[
    \mathcal{W}
    =
    \mathcal{A} *_M \mathcal{V}_k
    -
    \alpha_k \mathcal{U}_k.
    \]

    \STATE Compute
    \[
    \beta_{k+1}
    =
    \|\mathcal{W}\|_\Theta.
    \]

    \IF{$\beta_{k+1}=0$}
        \STATE Stop.
    \ENDIF

    \STATE Normalize:
    \[
    \mathcal{U}_{k+1}
    =
    \mathcal{W}/\beta_{k+1}.
    \]

    \STATE Compute backward projection:
    \[
    \mathcal{Z}
    =
    \mathcal{A}^{\top} *_N \mathcal{U}_{k+1}
    -
    \beta_{k+1} \mathcal{V}_k.
    \]

    \STATE Compute
    \[
    \alpha_{k+1}
    =
    \|\mathcal{Z}\|_\Theta.
    \]

    \IF{$\alpha_{k+1}=0$}
        \STATE Stop.
    \ENDIF

    \STATE Normalize:
    \[
    \mathcal{V}_{k+1}
    =
    \mathcal{Z}/\alpha_{k+1}.
    \]

\ENDFOR

\STATE Form bidiagonal matrix
\[
\widetilde{B}_m
=
\begin{bmatrix}
\alpha_1 & & & \\
\beta_2 & \alpha_2 & & \\
& \ddots & \ddots & \\
& & \beta_m & \alpha_m \\
& & & \beta_{m+1}
\end{bmatrix}.
\]

\RETURN $\{\mathcal{U}_k\}$, $\{\mathcal{V}_k\}$, $\widetilde{B}_m$.

\end{algorithmic}
\end{algorithm}

\section{Numerical Results}

This section illustrates the performance of the proposed 
Sketched Einstein Block GMRES (Algorithm 2) and 
Sketched Einstein Block Golub--Kahan (Algorithm 4) 
when applied to the restoration of blurred and noisy 
color images and videos.

Recovering RGB images (or videos) from blurred and noisy observations 
can be formulated as a tensor equation of the form

\[
\mathcal{C}
=
\mathcal{A} *_2 \mathcal{X},
\tag{56}
\]

where $\mathcal{A}$ is a fourth-order blurring tensor, 
$\mathcal{X}$ is the unknown exact RGB image, and 
$\mathcal{C}$ is the observed blurred image.

\subsection{Tensor Model for RGB Image Deblurring}

An RGB image is a third-order tensor of size 
$N \times N \times 3$, whose entries represent light intensities.

We assume that the original image $\widehat{\mathcal{X}}$ 
and the blurred image $\mathcal{C}$ have identical dimensions.

The blurring process is modeled by a two-dimensional 
Point Spread Function (PSF) array 
$P = (p_{ij})$ of small size compared to $N$.

For a spatially invariant blur with zero boundary conditions, 
the multidimensional convolution can be expressed as

\[
\mathcal{C}_{ijk}
=
\sum_{a=-1}^{1}
\sum_{b=-1}^{1}
p_{(2-a)(2-b)}
\widehat{\mathcal{X}}_{i+a,j+b,k},
\quad
k=1,2,3,
\tag{57}
\]

for $i,j = 1,\dots,N$.

Zero boundary conditions are imposed:

\[
\widehat{\mathcal{X}}_{0jk}
=
\widehat{\mathcal{X}}_{N+1,j,k}
=
\widehat{\mathcal{X}}_{i,0,k}
=
\widehat{\mathcal{X}}_{i,N+1,k}
=
0.
\]

This convolution can be equivalently written using the 
Einstein product as

\[
\mathcal{C}
=
\mathcal{A} *_2 \widehat{\mathcal{X}},
\]

where 
$\mathcal{A} \in \mathbb{R}^{N\times N\times N\times N}$ 
is constructed directly from the PSF entries.

\subsection{Gaussian Blur Model}

We consider a Gaussian PSF defined by

\[
p_{ij}
=
\exp\left(
-\frac{1}{2}
\left(\frac{i-k}{\sigma}\right)^2
-
\frac{1}{2}
\left(\frac{j-\ell}{\sigma}\right)^2
\right),
\tag{58}
\]

where $\sigma$ controls the amount of smoothing and 
$(k,\ell)$ is the center of the PSF.

Larger values of $\sigma$ yield more ill-posed problems.

\subsection{Noise Model}

The noise-free blurred image is

\[
\widehat{\mathcal{C}}
=
\mathcal{A} *_2 \widehat{\mathcal{X}}.
\]

We generate the observed image as

\[
\mathcal{C}
=
\widehat{\mathcal{C}}
+
\mathcal{N},
\tag{59}
\]

where $\mathcal{N}$ is a tensor with independent Gaussian 
entries of zero mean.

The noise level is defined as

\[
\nu
=
\frac{\|\mathcal{N}\|_F}
{\|\widehat{\mathcal{C}}\|_F}.
\]

\subsection{Sketched Einstein Implementation}

All orthogonality tests and norms in Algorithms 2 and 4 
are computed using the sketched Einstein inner product:

\[
\langle \mathcal{X}, \mathcal{Y} \rangle_\Theta
=
\langle \mathcal{S}(\mathcal{X}),
        \mathcal{S}(\mathcal{Y}) \rangle_F,
\]

where $\mathcal{S}$ is a random tensor embedding.

This reduces computational complexity while preserving 
the geometric structure with high probability:

\[
(1-\varepsilon)\|\mathcal{X}\|_F^2
\le
\|\mathcal{X}\|_\Theta^2
\le
(1+\varepsilon)\|\mathcal{X}\|_F^2.
\]

\subsection{Performance Measures}

To evaluate restoration quality, we compute the 
Relative Error (RE):

\[
\mathrm{RE}
=
\frac{\|
\widehat{\mathcal{X}}
-
\mathcal{X}_{\mathrm{restored}}
\|_F}
{\|\widehat{\mathcal{X}}\|_F}.
\]

We also report the Peak Signal-to-Noise Ratio (PSNR):

\[
\mathrm{PSNR}
=
10\log_{10}
\left(
\frac{\|\widehat{\mathcal{X}} - E(\widehat{\mathcal{X}})\|_F^2}
{\|\mathcal{X}_{\mathrm{restored}}
-
\widehat{\mathcal{X}}\|_F^2}
\right),
\]

where $E(\widehat{\mathcal{X}})$ denotes the mean intensity.

\subsection{Computational Environment}

All experiments were performed in MATLAB 
on an Intel(R) Core(TM) i7-8550U CPU @ 1.80GHz 
with 12GB RAM. 
Computations were carried out in double precision.

\subsection{Results}

Table 1 reports the performance comparison between:
- Algorithm 2: Sketched Einstein Block GMRES,
- Algorithm 4: Sketched Einstein Block Golub--Kahan,
- Classical tensor GKB (without sketching).

\begin{table}[H]
\centering
\caption{Results for Example 1}
\begin{tabular}{cccccc}
\hline
Noise Level & Method & PSNR & RE & CPU-time (s) \\
\hline
$10^{-3}$ & Einstein Block GMRES & 21.76 & $6.09\times10^{-2}$ & 8.28 
\\
& Sketched  Einstein Block GMRES & 22.72 & $6.29\times10^{-2}$ & 4.22
\\
           & Einstein Block Golub-Kahan & 24.37 & $4.51\times10^{-2}$ & 7.29 \\
 & Sketched Einstein Block Golub-Kahan & 24.42 & $4.51\times10^{-2}$ & 3.32
           \\
           & Classical GKB & 24.22 & $4.51\times10^{-2}$ & 18.45 \\
\hline
$10^{-2}$ &  Einstein Block GMRES  & 20.60 & $6.96\times10^{-2}$ & 3.31 \\
& Sketched Einstein Block GMRES  & 20.64 & $6.96\times10^{-2}$ & 1.21\\
           &  Einstein Block Golub-Kahan  & 20.97 & $6.67\times10^{-2}$ & 1.58 \\
&Sketched Einstein Block Golub-Kahan  & 20.98 & $6.67\times10^{-2}$ & 0.70 \\
           & Classical GKB & 20.08 & $6.66\times10^{-2}$ & 5.52 \\
\hline
\end{tabular}
\end{table}
The results in the table demonstrate that sketched methods provide a significant computational advantage while maintaining comparable accuracy to their standard counterparts. For both noise levels ($10^{-3}$ and $10^{-2}$), the PSNR of sketched methods is slightly higher or nearly identical to the non-sketched versions, and the Relative Error (RE) remains effectively unchanged, indicating that sketching does not compromise solution quality. In terms of CPU time, sketching consistently reduces computation time by roughly 40–60 $\%$ 
, with the most pronounced improvements at higher noise levels. For example, Einstein Block GMRES decreases from 8.28 s to 4.22 s at noise level $10^{-3}$, and from 3.31 s to 1.21 s at $10^{-2}$. Among all methods, the sketched Einstein Block Golub-Kahan achieves the best trade-off between accuracy and efficiency. Overall, the table confirms that sketching is an effective strategy for accelerating block iterative methods, making them particularly suitable for large-scale problems or time-sensitive applications, without sacrificing reconstruction quality.
\section{Conclusion}
The numerical results indicate that sketched versions of the block iterative algorithms provide a highly effective approach for video restoration. Across both noise levels ($\nu = 10^{-3}$ and $\nu = 10^{-2}$), the sketched methods achieve comparable or slightly better reconstruction quality in terms of PSNR and relative error (RE) compared to their non-sketched counterparts, confirming that the approximation introduced by sketching does not compromise accuracy.
Moreover, sketched methods significantly reduce computational time, often halving the CPU time required by standard algorithms. This speed-up is particularly evident in methods based on Golub-Kahan bidiagonalization, where sketching allows faster factorization while retaining the spectral information necessary for accurate Tikhonov-regularized solutions. Overall, the results demonstrate that sketched block methods offer an excellent balance between efficiency and accuracy, making them highly suitable for large-scale video restoration tasks where computational resources or time are limiting factors. The sketched approach therefore represents a practical and reliable alternative to classical iterative methods for restoring sequences of contaminated frames.

\end{document}